\documentclass[leqno,11pt]{amsart} 
\usepackage{amssymb} 
\usepackage{mathrsfs} 
\usepackage[pdftex]{graphicx}
\usepackage{tikz} 
\usetikzlibrary{patterns, intersections, calc}  
\theoremstyle{plain} 
\newtheorem{theorem}{\indent\sc Theorem}[section]
\newtheorem{lemma}[theorem]{\indent\sc Lemma}
\newtheorem{corollary}[theorem]{\indent\sc Corollary}

\newtheorem{fact}[theorem]{\indent\sc Fact}  
\theoremstyle{definition} 
\newtheorem{definition}[theorem]{\indent\sc Definition}
\newtheorem{remark}[theorem]{\indent\sc Remark}

\def\C{{\mathbb{C}}}
\def\RC{{\widehat{{\mathbb{C}}}}}
\def\D{{\mathbb{D}}}
\def\H{{\mathbb{H}}}
\def\L{{\mathbb{L}}}
\def\M{{\mathscr{M}}}
\def\P{{\mathscr{P}}}
\def\Pi{{\mathbb{P}}}
\def\R{{\mathbb{R}}}
\def\Z{{\mathbb{Z}}}

\begin{document}

\title[Application of the Bloch--Ros principle]{Application of the Bloch--Ros principle \\ to ramification theorem} 

\author[S.~Kasao]{Shunsuke Kasao}


\renewcommand{\thefootnote}{\fnsymbol{footnote}}
\footnote[0]{2020\textit{Mathematics Subject Classification}.
Primary 53A10; Secondary 30D35, 30D45.}
\keywords{
Gauss map, normal family, curvature estimate, Bloch--Ros principle}
\thanks{
This work was supported by JST SPRING Grant Nember JPMJSP2135. 
}

\address{
Division of Mathematical and Physical Sciences, Graduate School of Natural Science and Technology, Kanazawa University \endgraf
Kanazawa, 920-1192, \endgraf
Japan
}\email{kasao15039@gmail.com}


\maketitle

\begin{abstract}
The author and Kawakami revealed that the Picard little theorem, the Carath\'{e}odory--Montel theorem and the Fujimoto theorem---phenomena concerning omitted values in value distribution theory, normal family theory and the theory of Gauss maps of minimal surfaces, respectively---are not isolated results but can be discussed within a unified framework. 
We call this theoretical framework the Bloch--Ros principle. 
Furthermore, the value-distribution-theoretic properties of Gauss maps hold not only for minimal surfaces but also for other classes of surfaces that admit singularities, such as maxfaces in the Lorentz--Minkowski $3$-space and improper affine fronts in the affine $3$-space, thereby further extending this theoretical framework. 
In this paper, we provide a unified approach to phenomena concerning totally ramified values among value distribution theory, normal family theory and the theory of Gauss maps of these classes of surfaces. 
\end{abstract}


\section{Introduction} 
There exists the duality between normal family theory and value distribution theory of meromorphic functions, which is called the Bloch principle. 
Zalcman \cite{Za1975} formulated a more precise statement on it.
Moreover, inspired by this formulation, Ros \cite{RA} gave a similar formulation for minimal surface theory. 
Based on the Zalcman and Ros works, the author and Kawakami \cite{KK2025} restated the Zalcman lemma, which determines the normality for families of meromorphic functions (Fact \ref{thm:Zalcman}) and gave an effective criterion to determine which properties for meromorphic functions that play a role of Gauss maps of various classes of surfaces satisfy the Gaussian curvature estimate (Fact \ref{fact:m-curvature est}). 
As an application of these criterion, we also gave a systematic description of the relationship among the Liuoville theorem, the Montel theorem and generalizations of the Osserman theorem (the solution to the Nirenberg conjecture) as well as the Picard little theorem, the Carath\'{e}odory--Montel theorem and generalizations of the Fujimoto theorem. 
We call this theoretical framework the \textit{Bloch--Ros principle} in honor of the Bloch principle and the Ros work.

On the other hand, in the theory of value distribution of meromorphic functions, totally ramified values are studied as well as omitted values. 
Following \cite{KW2023,EMJ2026}, we recall the notion of a totally ramified value of a meromorphic function.

\begin{definition} \label{def:TRV}
Let $\mu \in \Z_{\geq 2} \cup \{ \infty \}$ and $g : \Sigma \longrightarrow \RC$ be a holomorphic map on a Riemann surface $\Sigma$. 
We say that $\alpha \in \RC$ is a \textit{totally ramified value} of $g$  of \textit{order} $\mu$ if the equation $ g = \alpha$ has no root of multiplicity less than $\mu$ on $\Sigma$. 
An omitted value $\alpha$ of $g$, i.e., a value not taken on $\Sigma$, is also regarded as a totally ramified value of $g$ of order $\infty$. 
For a totally ramified value of $g$ of order $\mu$, we also define its \textit{weight} as $1 - (1/\mu)$.
\end{definition}

For the Gauss map of a minimal surface in $\R^3$, Fujimoto \cite{Fu1992} proved that the Gaussian curvature estimate occurs if the sum of weights of totally ramified values, denoted by $\gamma$, is greater than $4$. 
In addition, Kawakami \cite{Ka2015} also gave a curvature estimate for the conformal metric $ds^2 =(1+|g|^2)^{m} \, |f|^2 \, |dz|^2$ on an open connected Riemann surface $\Sigma$, where $m$ is a positive integer, $f \, dz$ is a holomorphic $1$-form on $\Sigma$ and $g$ is a meromorphic function satisfying $\gamma > m + 2$ on $\Sigma$. 

In this paper, we reinterpret the above curvature estimate from the viewpoint of the Bloch--Ros principle. 
This approach reveals that the estimate is not an isolated phenomenon, but rather reflects a unified property underlying value distribution theory of meromorphic functions, normal family theory and the theory of Gauss maps of surfaces including minimal surfaces in $\R^3$, maxfaces in the Lorentz--Minkowski 3-space $\L^3$, improper affine fronts in the affine $3$-space $\R^3$ and flat fronts in the hyperbolic $3$-space $\H^3$. 

The paper is organized as follows: 
In Section \ref{sec:2}, we recall the duality between value distribution theory and normal family theory, and give a formulation to the theory of Gauss maps of various classes of surfaces. 
We first define the closedness and the compactness of a property in Definition \ref{def:cpt prop}.
Fact \ref{fact:merit of cpt}(A) and the Zalcman lemma (Fact \ref{thm:Zalcman}) represent the duality between value distribution theory and normal family theory. 
We define a Weierstrass $m$-triple $(\Sigma, \, f \, dz, \, g)$ in Definition \ref{def:m-pair} and $m$-curvature estimate for compact property in Definition \ref{def:m-curvature est}. 
Fact \ref{fact:m-curvature est} gives an effective criterion to determine which compact properties satisfy $m$-curvature estimate.
In Section \ref{sec:3}, we first define a certain property concerning totally ramified values in Theorem \ref{thm:Nev prop}. 
For simplicity, we refer to this property as ramification property. 
By Fact \ref{thm:Zalcman}, we prove that ramification property is a compact property in Theorem \ref{thm:Nev prop}. 
As a corollary of Fact \ref{fact:merit of cpt}(B), we also see in Corollary \ref{cor:Nev prop} that any holomorphic map satisfying $\gamma > 2$ on the punctured unit disk extends a holomorphic map on the unit disk. 
By applying Fact \ref{fact:m-curvature est}, we prove a ramification property satisfies $m$-curvature estimate in Theorem \ref{thm:main theorem}. 
Applications to the theory of Gauss maps of various classes of surfaces can be found in \cite[4 Applications]{Ka2015}. 
In conclusion, we can obtain the following trinity. 

\vspace{5mm}
\begin{center}
\begin{tikzpicture}[scale=0.7]
\draw(-10.5,0) rectangle (-2.5,1);
\draw(2.5,0) rectangle (10.5,1);
\draw(-4,-6.1) rectangle (4,-3);
\node (A) at (-6.5,0.5) {Normal Family};
\node (A1) at (-6.5,-0.5) {Theorem \ref{thm:Nev prop}};
\node (B) at (6.5,0.5) {Meromorphic Function on $\mathbb{C}$};
\node (B1) at (6.5,-0.5) {Fact \ref{fact:Nevanlinna}};
\node (C1) at (0,-3.5) {Minimal Surface in $\mathbb{R}^3$};
\node (C2) at (0,-4.2) {Maxface in $\mathbb{L}^3$};
\node (C3) at (0,-4.9) {Improper Affine Front in $\mathbb{R}^3$};
\node (C4) at (0,-5.6) {Flat Front in $\mathbb{H}^3$};
\node (C5) at (0,-6.6) {Theorem \ref{thm:main theorem}};
\draw (-2.4,0.5)--(2.4,0.5)--(0,-2.9)--(-2.4,0.5);
\draw[->] (-1,1)--(1,1);
\node (D) at (0,1.5) {\footnotesize{Fact \ref{fact:merit of cpt}(A)}};
\draw[->] (1,0)--(-1,0);
\node (E) at (0,-0.5) {\footnotesize{Fact \ref{thm:Zalcman}}};
\draw[->] (-2.1,-1)--(-1.1,-2.4);
\node (G) at (-2.6,-1.8) {\footnotesize{Fact \ref{fact:m-curvature est}}};
\end{tikzpicture}
\end{center}

\section{Background and Preliminaries} \label{sec:2} 
\subsection{Bloch Principle} \label{sec:2.1}
In this part, we review the duality between normal family theory and value distribution theory of holomorphic maps.

We first define a half of the spherical distance $\chi(\cdot,\cdot)$ based on \cite[Section 1.3]{Fu1993}. 
The extended complex plane $\RC:=\C \cup \{ \infty \}$ is identified with the unit sphere $S^2(\subset \R^3)$ by the stereographic projection $\pi_N:\RC \longrightarrow S^2$ that maps $\infty \in \RC$ to $N=(0,0,1) \in S^2$.
Here, a distance on $S^2$ is the induced metric as a subset of $\R^3$. 
Then the distance $\chi(z_1,z_2)$ between $z_1$ and $z_2$ in $\RC$ is defined as a half of the distance on $S^2$. 
We call $\chi(z_1,z_2)$ a half of the \textit{spherical distance} between $z_1$ and $z_2$ or the \textit{chordal distance} between $z_1$ and $z_2$. For $z_1, \, z_2 \in \C$, we can express as follows:
\[
\chi(z_1, z_2) = \dfrac{|z_1-z_2|}{\sqrt{1+|z_1|^2} \,  \sqrt{1+|z_2|^2}}, \quad \chi(z_1, \infty) = \dfrac{1}{\sqrt{1+|z_1|^2}}. 
\]

Let $\Sigma$ be a connected Riemann surface. We define  
\begin{align*}
\M(\Sigma) 
& := \{ g : \Sigma \longrightarrow \RC \, : \, \mathrm{holomorphic \; map} \} \\
& = \{ g : \Sigma \longrightarrow \RC \, : \, \mathrm{meromorphic \; function} \} \cup \{ g \equiv \infty \; \mathrm{on} \; \Sigma \}.
\end{align*}
On $\M(\Sigma)$, we consider the topology of the uniform convergence on compact subsets.
We remark that a sequence $\{g_n\}_{n=1}^\infty$ converges to $g$ in $\M(\Sigma)$ in this topology if and only if $\{g_n\}_{n=1}^\infty$ converges locally uniformly to $g$ on $\Sigma$ with respect to the spherical distance $\chi$.    

Let $P$ be an arbitrary property for $\RC$-valued holomorphic maps and we set
\begin{align*}
\P(\Sigma):=\{ g \in \M(\Sigma) \;:\; g \;\mathrm{satisfies \; the \; property}\; P \}.
\end{align*}
For a property $P$, we give the following definitions based on \cite{RA}:

\begin{definition} \label{def:cpt prop}
Given a property $P$, we consider the following assertions: 
\begin{enumerate}
\item[(P$1$)] For any two Riemann surfaces $\Sigma$ and $\Sigma'$, and for any holomorphic map without ramification points $\phi:\Sigma \longrightarrow \Sigma'$, if $g \in \P(\Sigma')$, then $g\circ\phi \in \P(\Sigma)$. 
\item[(P$2$)] Let $\Sigma$ be a Riemann surface and $g \in \M(\Sigma)$. 
If we have $g|_{\Omega} \in \P(\Omega)$ for any relatively compact domain $\Omega$ of $\Sigma$, then $g \in \P(\Sigma)$.
\item[(P$3$)] For any Riemann surface $\Sigma$, the family $\P(\Sigma)$ is a closed subset of $\M(\Sigma)$.
\item[(P$4$)] For any Riemann surface $\Sigma$, the family $\P(\Sigma)$ is normal on $\Sigma$. 
\end{enumerate}
If $P$ satisfies the axioms (P$1$), (P$2$) and (P$3$), $P$ is called a \textit{closed property}. 
Furthermore, if $P$ satisfies the axioms (P$1$), (P$2$), (P$3$) and (P$4$), $P$ is called a \textit{compact property}.  
\end{definition}

We define $D(a ; R) := \{ z \in \C : |z-a| < R \}$ and $\D := D(0 ; 1)$.  
If $P$ is shown to be compact, the following properties are derived. 

\begin{fact}{\cite[Lemma 3]{RA}} \label{fact:merit of cpt}
Let $P$ be a compact property. 
Then the following holds:
\begin{enumerate}
  \item [(A)] $\P(\C)$ contains only constant maps.
  \item[(B)] If $g \in \P(\D \setminus \{ 0 \})$, then $z=0$ is not an essential singularity of $g$, that is, $g$ is a holomorphic map from $\D$ to $\RC$.
\end{enumerate}
\end{fact}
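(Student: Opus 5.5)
For (A), I will use the rescaling built into (P1) together with the normality axiom (P4). Let $g \in \P(\C)$. For each $n \in \Z_{\geq 1}$ I consider the map $\phi_n : \D \longrightarrow \C$, $\phi_n(z) = nz$. It is unramified, so (P1) gives $g_n := g \circ \phi_n \in \P(\D)$. By (P4) the family $\{g_n\}$ is normal on $\D$, and the plan is to turn this into a Lipschitz-type bound via Marty's theorem. Normality gives a uniform bound on the spherical derivative on compact subsets, namely
\[
g_n^\#(z) = \frac{|g_n'(z)|}{1+|g_n(z)|^2} \leq C \quad \text{for } |z| \leq 1/2 .
\]
Since $g_n^\#(z) = n\, g^\#(nz)$, this yields $g^\#(w) \leq C/n$ for $|w| \leq n/2$. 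Letting $n \to \infty$ forces $g^\# \equiv 0$ on $\C$, hence $g$ is constant; this covers $g \equiv \infty$ as well. The only point to check is that Marty's criterion applies to maps into $\RC$ that are allowed to be identically $\infty$, which is standard for $\M(\Sigma)$.

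For (B), I will again combine (P1) and (P4), this time through the exponential covering. Let $g \in \P(\D \setminus \{0\})$. Consider the left half-plane $\H^- = \{ w : \mathrm{Re}\, w < 0 \}$ and the unramified covering $\phi(w) = e^{w} : \H^- \longrightarrow \D \setminus \{0\}$. By (P1), $G := g \circ \phi \in \P(\H^-)$. I then translate: $G_n(w) := G(w - n)$ on $\H^-$. This is composition with an automorphism of $\H^-$, so (P1) again gives $G_n \in \P(\H^-)$, and by (P4) the sequence $\{G_n\}$ is normal. Passing to a subsequence, $G_{n_k} \to G_\infty$ locally uniformly in the spherical metric on $\H^-$. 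Restricting to the compact rectangle $K = \{ -1 \leq \mathrm{Re}\, w \leq 0 \ \text{(shifted slightly inside)},\ |\mathrm{Im}\, w| \leq \pi \}$ gives equicontinuity of $g$ on the annuli $A_n = \{ e^{-n-1} \leq |z| \leq e^{-n} \}$. Equivalently, Marty's bound gives $|z|\, g^\#(z) \leq C$ near $0$, with $C$ uniform, because every annulus pulls back to the same compact set.

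The conclusion then follows from a classical fact. The estimate $|z|\, g^\#(z) = O(1)$ as $z \to 0$ excludes an essential singularity. This is Lehto--Virtanen: a meromorphic function with $\limsup_{z\to 0} |z|\, g^\#(z) < \infty$ at an isolated singularity is normal there. A normal function has a limit along every path to $0$ where one exists. Combined with the big Picard theorem, or directly via Lehto--Virtanen, $0$ is at most a pole, so $g$ extends holomorphically to $\D \to \RC$.

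The main obstacle is this last step. To avoid citing Lehto--Virtanen, I will argue directly. Suppose $0$ is essential. By Casorati--Weierstrass, I can pick points $z_k \to 0$ with $g(z_k) \to 0$ and points $z_k' \to 0$ with $g(z_k') \to \infty$. Using the uniform Marty bound on consecutive annuli, the spherical oscillation of $g$ on each circle $|z| = r$ is bounded. By the Cauchy estimates applied to $\log$, or by a limit function $G_\infty$, the maximum modulus on circles then behaves like a limit function which must be constant or else controlled. The cleanest route is to show that the limit $G_\infty$ is $2\pi i$-periodic and must be constant. I expect this periodicity-and-constancy step to be the delicate part.
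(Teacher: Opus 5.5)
Your part (A) is correct: rescaling by $z\mapsto nz$ together with Marty's criterion is the standard argument. The paper gives no proof of this Fact and only cites Ros, so I judge (B) on its own merits.

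Part (B) has a genuine gap. The classical fact you invoke is false: a bound $|z|\,g^\#(z)=O(1)$ as $z\to 0$ does \emph{not} exclude an essential singularity. For $q>1$ put
\[
g(z)=\prod_{n\ge 0}\frac{z-q^{-n}}{z+q^{-n}} .
\]
This function is meromorphic on $\C\setminus\{0\}$. Its zeros $q^{-n}$ and poles $-q^{-n}$ accumulate at $0$, so $z=0$ is essential. It satisfies $g(z/q)=\frac{z-q}{z+q}\,g(z)$, hence $(-1)^n g(q^{-n}z)=\prod_{k=1}^{n}\frac{1-q^{-k}z}{1+q^{-k}z}\,g(z)$. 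This converges locally uniformly on $\C\setminus\{0\}$ to a nonconstant function. So the dilates $g(q^{-n}z)$ form a normal family, and Marty's criterion gives exactly your bound $|z|\,g^\#(z)\le C$ near $0$.

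Your argument uses (P1) only for the translations $w\mapsto w-n$ of the half-plane, which are the dilations $z\mapsto e^{-n}z$ of $\D\setminus\{0\}$. (Incidentally, these are not automorphisms of the half-plane, though (P1) does not need that.) You then apply (P4). Everything you derive is satisfied by this $g$ with $q=e$. Neither Casorati--Weierstrass nor periodicity can finish the proof: the translates $G(w-n)$ of $G=g\circ\exp$ converge, up to sign, to a \emph{nonconstant} $2\pi i$-periodic function. So your expected ``periodic, hence constant'' step is false.

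What is missing is an input forcing $|z|\,g^\#(z)\to 0$, and that requires more than dilations. The simplest repair is to apply (P1) to $\phi_n(z)=z^n$ on $\D\setminus\{0\}$, which has no ramification points there; equivalently, use $w\mapsto nw$ on the half-plane. Normality of $\{g(z^n)\}$ and Marty's criterion on $\{1/4\le|z|\le 1/2\}$ give $n|z|^{n-1}g^\#(z^n)\le C$, that is,
\[
|w|\,g^\#(w)\le \frac{C'}{\log(1/|w|)} \qquad (0<|w|\le 1/2).
\]
Consequently $\int_{0<|w|<1/2}(g^\#)^2\,dA\le 2\pi C'^2\int_0^{1/2}\frac{d\rho}{\rho\log^2(1/\rho)}<\infty$, so $g$ covers $\RC$ with finite spherical area near $0$. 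At an essential singularity, the big Picard theorem makes all but two values be taken infinitely often, which forces infinite area.

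Alternatively, your ``limit is constant'' idea can be rescued, but not by periodicity. Consider $G(w-n)$ on the growing half-planes $\{\mathrm{Re}\,w<n\}$. Use (P1), (P4) and a diagonal argument to obtain a limit on all of $\C$. By (P3) and (P2) it lies in $\P(\C)$, so it is constant by (A). This yields $|z|\,g^\#(z)\to 0$, but you still need a separate argument that this excludes an essential singularity.

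One such argument: on a pole-free annulus where $|g|\le 2$, the circle means $\frac{1}{2\pi}\int_0^{2\pi} g(\rho e^{i\theta})\,d\theta$ are constant. Combined with the small oscillation of $g$ on each circle, this prevents $g$ from passing from near $0$ to near $\infty$. That contradicts Casorati--Weierstrass.
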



Let $\Omega$ be a domain in $\C$. 
Then we denote by $|\nabla g|_e$ the length of the Euclidean gradient of $\pi_N \circ g$ on $\Omega$. 
This corresponds to the spherical derivative in complex analysis (\cite[Section 1.3]{WB},\cite[Lemma 1.6]{CTC}\cite[Section 1.2]{Sc}). 
In fact, for a meromorphic function $g\colon \Omega \to \RC$, we obtain 
\begin{align*}
|\nabla g|_e^2=\dfrac{8\,|g'|^2}{(1+|g|^2)^2}, \; \mathrm{i.e.}, \; |\nabla g|_e=\dfrac{2\sqrt{2}\,|g'|}{1+|g|^2}.
\end{align*} 
and its spherical derivative is given by $g^{\#}=|g'|/(1+|g|^2)$. 
Note that we have $|\nabla g|_e \equiv 0$ on $\Omega$ if $g \equiv \infty$ on $\Omega$.  

The following theorem corresponds to the Zalcman lemma (for example, see \cite[3. The main lemma]{Za1975}) in complex analysis, which is useful in determining whether a property is compact. 

\begin{fact}{\cite[Lemma 3]{RA}}{\cite[Theorem 2.12]{KK2025}} \label{thm:Zalcman}
Let $P$ be a closed property. 
Then $\rm{(\hspace{.18em}i\hspace{.18em})}$ or $\rm{(\hspace{.08em}ii\hspace{.08em})}$ holds: 
\begin{enumerate}
\item[\rm(\hspace{.18em}i\hspace{.18em})] $P$ is a compact property. 
\item[\rm(\hspace{.08em}ii\hspace{.08em})] There exists $f \in \P(\C)$ such that $|\nabla f|_e(0)=1$ and $|\nabla f|_e \leq 1$ on $\C$. 
\end{enumerate}
\end{fact}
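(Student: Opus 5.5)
Fact \ref{thm:Zalcman} is a Zalcman-type dichotomy, and I would prove it by contradiction using Zalcman rescaling. Suppose $P$ is closed but not compact. Then (P$4$) fails: there is a Riemann surface $\Sigma$ such that $\P(\Sigma)$ is not normal on $\Sigma$. Normality is a local property, so there is a point $p\in\Sigma$ and a coordinate disk around $p$ on which $\P(\Sigma)$ fails to be normal. By (P$1$), applied to the coordinate inclusion (which is unramified), restriction keeps us inside the property. After a biholomorphic rescaling of the coordinate we may therefore assume that $\P(\D)$ is not normal on $\D$, or at least on some disk. By Marty's theorem, the spherical derivatives $|\nabla g|_e$, $g\in\P(\D)$, are then not locally uniformly bounded: there are $g_n\in\P(\D)$ and points $z_n\to z_0\in\D$ with $|\nabla g_n|_e(z_n)\to\infty$.

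Next I would run the standard Zalcman/Ros rescaling. Fix $r>0$ with $\overline{D(z_0;2r)}\subset\D$ and choose $w_n\in \overline{D(z_0;r)}$ maximizing
\[
M_n(z):=(r-|z-z_0|)\,|\nabla g_n|_e(z)
\]
(the brute-force "point selection" trick). Then $M_n(w_n)\to\infty$. Put $\rho_n:=1/|\nabla g_n|_e(w_n)$ and $R_n:=(r-|w_n-z_0|)/\rho_n\to\infty$, and define $f_n(\zeta):=g_n(w_n+\rho_n\zeta)$ on $D(0;R_n)$. Since the affine map $\zeta\mapsto w_n+\rho_n\zeta$ is unramified, (P$1$) gives $f_n\in\P(D(0;R_n))$. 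Moreover $|\nabla f_n|_e(0)=1$. The maximality of $w_n$ gives, for $|\zeta|<R_n$,
\[
|\nabla f_n|_e(\zeta)\le \frac{r-|w_n-z_0|}{r-|w_n+\rho_n\zeta-z_0|}\le \frac{R_n}{R_n-|\zeta|}.
\]
This bound tends to $1$ locally uniformly in $\zeta$.

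Now I pass to a limit. By Marty's theorem the $f_n$ form a normal family on each $D(0;R)$, so a diagonal subsequence converges in $\M(\C)$ to some $f$. Convergence of spherical derivatives gives $|\nabla f|_e(0)=1$ and $|\nabla f|_e\le1$ on $\C$; in particular $f$ is nonconstant and $f\not\equiv\infty$.

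The main obstacle is showing $f\in\P(\C)$, which needs (P$3$) together with (P$2$). For a relatively compact domain $\Omega\subset\C$, we have $\Omega\subset D(0;R_n)$ for all large $n$. Restricting via (P$1$) (inclusion maps are unramified) gives $f_n|_\Omega\in\P(\Omega)$. Closedness (P$3$) then yields $f|_\Omega\in\P(\Omega)$, and (P$2$) gives $f\in\P(\C)$, which is alternative (ii). One subtlety to handle is the normalization $|\nabla f|_e(0)=1$ versus the constant $2\sqrt2$ in $|\nabla g|_e$. This is harmless because the definition of $\rho_n$ absorbs the constant. One should also check that local non-normality on an abstract $\Sigma$ really reduces to a planar disk, which again uses (P$1$) with the chart map.
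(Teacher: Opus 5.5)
Your proof is correct. The paper does not reprove this Fact; it imports it from Ros (Lemma 3) and Kasao--Kawakami (Theorem 2.12). Your argument is exactly the Zalcman--Ros point selection and affine rescaling used there: (P$1$) keeps the chart and rescaled maps inside the property, and (P$3$) together with (P$2$) puts the limit in $\P(\C)$. The only steps you leave implicit are routine: for large $n$, $M_n(w_n)\ge M_n(z_n)\ge \tfrac r2\,|\nabla g_n|_e(z_n)\to\infty$, and $w_n$ is interior because $M_n$ vanishes on $\partial D(z_0;r)$.
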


\subsection{Bloch--Ros Principle} \label{sec:2.2}

In this part, we survey the trinity among value distribution theory of holomorphic maps, normal family theory and the theory of Gauss maps of several classes of surfaces including minimal surfaces in $\R^3$, maxfaces in $\L^3$, improper affine fronts in $\R^3$ and flat fronts in $\H^3$. 

We first introduce the notion of Weierstrass $m$-triples. 
This is a generalization of Weierstrass data for minimal surfaces in $\R^{3}$. 

\begin{definition} \label{def:m-pair}
Let $\Sigma$ be an open Riemann surface with the conformal metric
\begin{align} \label{eq:m-metric}
ds^2 := (1+|g|^2)^m \, |f|^2 \, |dz|^2,
\end{align}
where $f \, dz$ is a holomorphic $1$-form on $\Sigma$, $g$ is a meromorphic function on $\Sigma$ and $m \in \Z_{>0}$. Then we call this triple $(\Sigma, \, f \, dz, \, g)$ a \textit{Weierstrass $m$-triple}.   
\end{definition}

\begin{remark}
We note that
\begin{align} \label{eq:regularity}
& 0 < (1+|g|^2)^m \, |f|^2 < +\infty \quad \Longleftrightarrow \quad (f \, dz)_0 = m \, (g)_\infty,  
\end{align}
where $(f \, dz)_{0}$ is the zero divisor of $f \, dz$ and $(g)_{\infty}$ is the polar divisor of $g$. 
Moreover, for a conformal metric $ds^{2}= (\lambda(z))^2 \, |dz|^2$, we have 
$K_{ds^2} =-(\Delta\log{\lambda})/ \lambda^{2}$. 
Hence the Gaussian curvature $K_{ds^2}$ of any Weierstrass $m$-triple $(\Sigma, \, f \, dz, \, g)$ is given by
\begin{align*} 
K_{ds^2} = - \dfrac{2 \, m \, |g'|^2}{(1+|g|^2)^{m+2} \, |f|^2}.
\end{align*}
\end{remark}

\begin{definition} \label{def:m-curvature est}
Let $P$ be a compact property for $\RC$-valued holomorphic maps and $m \in \Z_{>0}$. 
We say that $P$ satisfies \textit{$m$-curvature estimate} if there exists a positive constant $C=C(P,m)>0$ such that for any Weierstrass $m$-triple $(\Sigma, \, f \, dz, \, g)$ satisfying $g \in \P(\Sigma)$, the following inequality holds: 
\begin{align*}
|K_{ds^2}|^{\frac{1}{2}}  \leq \dfrac{C}{d} \quad\mathrm{on} \; \Sigma.
\end{align*}
Here, $K_{ds^2}$ is the Gaussian curvature with respect to the metric $ds^2$ given by \eqref{eq:m-metric} and $d$ is its geodesic distance to the boundary of $\Sigma$.
\end{definition}

The following result is an effective criterion to determine which compact properties satisfy $m$-curvature estimate.

\begin{fact}{\cite[Theorem3.4]{KK2025}} \label{fact:m-curvature est}
Let $P$ be a compact property for $\RC$-valued holomorphic maps and $m \in \Z_{>0}$. 
Then $\rm(\hspace{.18em}i\hspace{.18em})$ or $\rm(\hspace{.08em}ii\hspace{.08em})$ holds:

\begin{enumerate}
  \item[\rm(\hspace{.18em}i\hspace{.18em})] $P$ satisfies $m$-curvature estimate. 
  \item[\rm(\hspace{.08em}ii\hspace{.08em})] There exists a Weierstrass $m$-triple $(\D, \, f \, dz, \, g)$ such that 
\begin{enumerate}
  \item[(A)] nonconstant meromorphic function $g \in \P(\D)$,
  \item[(B)] $ds^2 :=(1+|g|^2)^m \, |f|^2 \, |dz|^2$ is complete on $\D$, 
  \item[(C)] $|K_{ds^2}(0)|=\dfrac{1}{4}$, and $|K_{ds^2}| \leq 1$ on $\D$. 
\end{enumerate}  
\end{enumerate}
 \end{fact}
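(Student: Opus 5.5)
Since the statement is a dichotomy, I would assume that (i) fails and construct the triple in (ii). This is a Ros-type rescaling argument in which the axioms (P1)--(P4) replace the hypotheses on the Gauss map.

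\emph{Step 1: choosing the blow-up points.} If $P$ does not satisfy $m$-curvature estimate, there are Weierstrass $m$-triples $(\Sigma_n, f_n\,dz, g_n)$ with $g_n\in\P(\Sigma_n)$ and points $p_n\in\Sigma_n$ such that $|K_n|^{1/2}(p_n)\,d_n(p_n)>n$. Let $B_n$ be the geodesic ball of radius $d_n(p_n)$ about $p_n$. Since $B_n$ is relatively compact in $\Sigma_n$ with respect to the metric, the continuous function $q\mapsto |K_n|^{1/2}(q)\,\mathrm{dist}(q,\partial B_n)$ attains its maximum at some $q_n\in B_n$, and this maximum exceeds $n$.

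Rescale the metric by the constant factor $\lambda_n^2=4|K_n(q_n)|$. The standard maximality argument then gives:
\begin{itemize}
\item $|K|=1/4$ at $q_n$;
\item $|K|\le 1$ on the geodesic ball of radius $R_n\to\infty$ about $q_n$.
\end{itemize}
Rescaling $ds^2$ by a constant amounts to replacing $f_n$ by $\lambda_n f_n$, so the rescaled data is again a Weierstrass $m$-triple.

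\emph{Step 2: uniformizing.} The metric has $K\le 0$ and is conformal, so I would pass to the universal cover of this geodesic ball. That cover is a simply connected Riemann surface, and I realize it as a disk $D(0;\rho_n)$ with $q_n\mapsto 0$. The covering map is unramified, so (P1) gives that the pulled-back $g_n$ lies in $\P(D(0;\rho_n))$. I normalize the coordinate so that the conformal factor equals $1$ at $0$.

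\emph{Step 3: passing to the limit.} This is the step I expect to be the main obstacle.
\begin{itemize}
\item \emph{Convergence of $g_n$.} By (P4) the family $\P$ is normal, so after passing to a subsequence $g_n\to g$ locally uniformly in $\M$. By (P3) the limit satisfies $g\in\P$ on each relatively compact subdomain, and by (P2) it satisfies $P$ on the whole limit domain.
\item \emph{Convergence of $f_n$.} Write $\Lambda_n=(1+|g_n|^2)^{m/2}|f_n|$. Then $\Delta\log\Lambda_n=-K_n\Lambda_n^2$. The bound $|K_n|\le1$ together with the normalization $\Lambda_n(0)=1$ should give two-sided local bounds for $\Lambda_n$ via a Harnack-type estimate. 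Combined with the local convergence of $g_n$, this gives local bounds on the holomorphic functions $f_n$ away from the poles of $g$. Near the poles I would use the relation $(f\,dz)_0=m\,(g)_\infty$ in \eqref{eq:regularity}, working with $g_n^{-1}$ instead.
\item \emph{Nondegeneracy of the limit.} I must also show that $\rho_n$ does not shrink to $0$. I would use the curvature bound again: geodesic balls of a fixed radius contain coordinate disks of a uniform radius.
\end{itemize}
Granting this, $f_n\to f$, and the limit triple is defined on a simply connected domain $\Omega\in\{\D,\C\}$.

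\emph{Step 4: properties of the limit.}
\begin{itemize}
\item The limit metric is complete, because $R_n\to\infty$.
\item It satisfies $|K(0)|=1/4$ and $|K|\le1$ everywhere.
\item The function $g$ is nonconstant, since $K(0)\ne0$ and the curvature formula shows that $K\equiv 0$ whenever $g$ is constant.
\item $\Omega\neq\C$: if $\Omega=\C$, then $g\in\P(\C)$ would be constant by Fact~\ref{fact:merit of cpt}(A), a contradiction.
\end{itemize}
Hence $\Omega=\D$, and $(\D, f\,dz, g)$ satisfies (A), (B) and (C) of (ii).
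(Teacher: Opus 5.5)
Your outline (Ros's blow-up, passage to a simply connected cover, (P1)--(P4) to keep the rescaled $g_n$ in $\P$ and extract a limit, and Fact~\ref{fact:merit of cpt}(A) to exclude $\C$) is the right frame. The paper gives no proof of this statement; it only quotes it from \cite{KK2025}. Step~1 needs a small repair. The ball of radius exactly $d_n(p_n)$ is in general not relatively compact; it is all of $\Sigma_n$ when $d_n(p_n)=\infty$. Instead take $r_n<d_n(p_n)$ with $|K_n|^{1/2}(p_n)\,r_n>n$, and maximize $|K_n|^{1/2}(q)\,(r_n-\mathrm{dist}(p_n,q))$ on the compact closed ball. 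The real problem is Step~3, which is the actual content of the theorem and which you only grant.

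The mechanism you propose there does not work. The relation $0\le\Delta\log\Lambda_n\le\Lambda_n^2$ carries no Harnack inequality. Moreover, $|K_n|\le 1$ together with $\Lambda_n(0)=1$ gives no local bounds at all. The Weierstrass $m$-triples $(\D, e^{nz}\,dz, 0)$ have flat metrics $|e^{nz}|\,|dz|$ with $\Lambda_n(0)=1$, and these are unbounded above and below on every $D(0;r)$. Every curve from $0$ to $t>0$ has length at least $(e^{nt}-1)/n$, so the geodesic ball of radius $1$ about $0$ contains no disk $D(0;c)$ with $c>\log(n+1)/n$. Your nondegeneracy claim for $\rho_n$ therefore fails for the same reason. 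What excludes such examples is that $0$ lies at distance $\ge R_n\to\infty$ from the boundary, and you never use this in Step~3.

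The same information is needed in Step~4. Locally uniform convergence on compact subsets of the limit disk, plus $R_n\to\infty$, does not make the limit complete. A priori, the length of paths to the boundary could be concentrated in thinner and thinner collars near $|z|=\rho_n$ that disappear in the limit. You need the metric balls $B_{\Lambda_n}(0,s)$ to lie in a fixed compact subset of the limit domain for all large $n$. Both points require a genuine uniform estimate built on the distance-to-boundary bound together with $-1\le K\le 0$.

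One way to get this estimate is as follows. On a simply connected domain, $\Phi=\int\bigl({\binom{m}{k}}^{1/2}fg^k\bigr)_{k=0}^{m}\,dz$ is a holomorphic immersion into $\C^{m+1}$. Each $fg^k$ is holomorphic, and $fg^m\neq 0$ at the poles of $g$, by \eqref{eq:regularity}. Its induced metric is exactly $ds^2$, since $\sum_{k}\binom{m}{k}|g|^{2k}=(1+|g|^2)^m$. For minimal surfaces in any codimension $|A|^2=-2K\le 2$, so the blown-up surfaces are graphs of uniform size over their tangent planes. The usual compactness for minimal surfaces with bounded second fundamental form, as in Ros's original argument, then gives a complete limit with $|K(0)|=1/4$ and $|K|\le 1$. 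The rest of your Step~4 then goes through.
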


\section{Main Results} \label{sec:3}
\subsection{Compactness of Ramification Property} \label{sec:3.1}

As for the results concerning totally ramified values, the following is widely recognized. 

\begin{fact}{\cite{CTC},\cite{Ne1970},\cite{Ru2021}} \label{fact:Nevanlinna}
For $q \in \Z_{>0}$, let $\alpha(q) := \{ \alpha_1, \, \cdots, \, \alpha_q  \} \subset \RC$ be a set of distinct points and let $\mu_1, \, \cdots, \, \mu_q \in \Z_{\geq 2} \cup \{ \infty \}$ satisfy $\gamma := \sum_{j=1}^q (1 - (1/\mu_j) ) > 2$.
Then any meromorphic on $\C$ satisfying the condition that each $\alpha_j$ is a totally ramified value of order $\mu_j$ is constant. 
\end{fact}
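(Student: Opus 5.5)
We argue by contradiction using Nevanlinna's first and second main theorems. Suppose $g$ is a nonconstant meromorphic function on $\C$ such that each $\alpha_j$ is a totally ramified value of order $\mu_j$, and $\gamma>2$. Composing $g$ with a M\"obius transformation changes neither the multiplicities of the roots of $g=\alpha_j$ nor the orders $\mu_j$, and it moves the $\alpha_j$ to new distinct points. So we may assume $\alpha_j\in\C$ for all $j$. Note also that $\gamma>2$ together with $1-(1/\mu_j)\le 1$ forces $q\ge 3$.

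\emph{Rational case.} First I would dispose of the case where $g$ is rational of degree $d\ge 1$, using the Riemann--Hurwitz formula on $\RC$. The total ramification of $g$ is $2d-2$. Over each $\alpha_j$ the fibre consists of points of multiplicity at least $\mu_j$, so it has at most $d/\mu_j$ points. Hence the ramification over $\alpha_j$ is at least $d-d/\mu_j$. Summing over $j$ gives $2d-2\ge d\gamma>2d$, a contradiction. An omitted value cannot occur for a rational map, and this is consistent with the count, since then the fibre would be empty.

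\emph{Transcendental case.} Let $T(r,g)$ be the Nevanlinna characteristic, $N(r,\alpha)$ the counting function of $\alpha$-points counted with multiplicity, and $\overline{N}(r,\alpha)$ the same function counted without multiplicity. The second main theorem in its truncated form gives
\[
(q-2)\,T(r,g)\le \sum_{j=1}^q \overline{N}(r,\alpha_j)+S(r,g),
\]
where $S(r,g)=o(T(r,g))$ as $r\to\infty$ outside a set of finite linear measure.

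Every root of $g=\alpha_j$ has multiplicity at least $\mu_j$, so $\overline{N}(r,\alpha_j)\le \mu_j^{-1}N(r,\alpha_j)$. When $\mu_j=\infty$ the value is omitted, $\overline{N}(r,\alpha_j)=0$, and the same inequality holds with $1/\mu_j=0$. The first main theorem gives $N(r,\alpha_j)\le T(r,g)+O(1)$. Substituting these two bounds, we obtain
\[
(\gamma-2)\,T(r,g)=\Bigl(q-2-\sum_{j}\mu_j^{-1}\Bigr)T(r,g)\le S(r,g)+O(1).
\]
Since $g$ is transcendental, $T(r,g)/\log r\to\infty$, and in particular $T(r,g)\to\infty$. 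Letting $r\to\infty$ outside the exceptional set then contradicts $\gamma>2$. Hence $g$ is constant.

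\emph{Main obstacle.} The real difficulty is the second main theorem with the error term $S(r,g)=o(T(r,g))$, which rests on the lemma on the logarithmic derivative. I would cite it from the references \cite{Ne1970,Ru2021} rather than reprove it. Everything else above is bookkeeping with counting functions.
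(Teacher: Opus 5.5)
Your argument is correct. The paper gives no proof of this Fact and only cites \cite{CTC,Ne1970,Ru2021}, where it is proved essentially as you do: the truncated second main theorem, the bound $\overline{N}(r,\alpha_j)\le \mu_j^{-1}N(r,\alpha_j)$ and the first main theorem combine to give $(\gamma-2)\,T(r,g)\le S(r,g)+O(1)$. Your separate Riemann--Hurwitz treatment of rational $g$ is also sound, and it sensibly avoids the point that the usual error bound $S(r,g)=O(\log (r\,T(r,g)))$ is only $o(T(r,g))$ for transcendental $g$.
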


\remark \label{rem:Picard and Nevanlinna}
For a nonconstant meromorphic function $g$ on $\C$, let $D_g$ denote the number of omitted values of $g$. 
By applying Fact \ref{fact:Nevanlinna} to $\{ \alpha_1, \cdots, \alpha_{D_g} \} = \RC \setminus g(\C)$,  
\begin{align*}
  D_g = \gamma \leq 2, 
\end{align*} 
that is, the Picard little theorem holds. 

\begin{theorem} \label{thm:Nev prop}
For $q \in \Z_{>0}$, let $\alpha(q) := \{ \alpha_1, \, \cdots, \, \alpha_q  \} \subset \RC$ be a set of distinct points and let $\mu_1, \, \cdots, \, \mu_q \in \Z_{\geq 2} \cup \{ \infty \}$. 
For a Riemann surface $\Sigma$ and a holomorphic map $g:\Sigma \longrightarrow \RC$, we define the property $P_{\alpha(q)}$ as 
\begin{center}
each $\alpha_j$ is a totally ramified value of $g$ of order at least $\mu_j$ \; or \; $g \equiv $ $($constant$)$ on $\Sigma$. 
\end{center}
Then $P_{\alpha(q)}$ is a closed property. 
Furthermore, $P_{\alpha(q)}$ is a compact property if $\gamma := \sum_{j=1}^q (1 - (1/\mu_j) ) > 2$.
\end{theorem}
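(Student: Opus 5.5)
We verify the axioms (P1)--(P3) directly, and then deduce compactness from Fact \ref{thm:Zalcman} combined with Fact \ref{fact:Nevanlinna}.

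\emph{(P1).} Let $\phi:\Sigma\to\Sigma'$ be holomorphic without ramification points, and let $g\in\P_{\alpha(q)}(\Sigma')$. If $g$ is constant, then so is $g\circ\phi$. Otherwise, take $p\in\Sigma$ with $g(\phi(p))=\alpha_j$. Since $\phi$ is locally biholomorphic at $p$, the multiplicity of $g\circ\phi-\alpha_j$ at $p$ equals the multiplicity of $g-\alpha_j$ at $\phi(p)$, which is at least $\mu_j$. For $\alpha_j=\infty$ we use local coordinates, i.e.\ we apply the same argument to $1/g$. For $\mu_j=\infty$, an omitted value stays omitted.

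\emph{(P2).} The property is local. If $g$ is nonconstant on $\Sigma$ and $p$ is a root of $g=\alpha_j$, choose a relatively compact domain $\Omega\ni p$. Then $g|_\Omega$ is nonconstant by the identity theorem, so $g|_\Omega\in\P_{\alpha(q)}(\Omega)$ gives multiplicity at least $\mu_j$ at $p$.

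\emph{(P3).} Let $g_n\in\P_{\alpha(q)}(\Sigma)$ with $g_n\to g$ locally uniformly in $\chi$. If $g$ is constant there is nothing to prove, so assume $g$ is nonconstant and $g(p)=\alpha_j$. After composing with a M\"obius rotation of $\RC$ sending $\alpha_j$ to $0$, work in a coordinate disk $U$ around $p$ on which $g$ is finite-valued near $p$. Choose a small circle $|z-p|=r$ on which $g\neq\alpha_j$ and which encloses no other root. Then $g_n\to g$ uniformly there in the Euclidean sense, and Hurwitz's theorem (the argument principle) shows that for large $n$, $g_n-\alpha_j$ has exactly $k$ zeros counted with multiplicity inside, where $k$ is the order of $g-\alpha_j$ at $p$. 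In particular $g_n$ is nonconstant for large $n$, so each of these zeros has multiplicity at least $\mu_j$, and $k\ge\mu_j$ whenever $k\ge1$. If $\mu_j=\infty$, then $g_n$ omits $\alpha_j$ and Hurwitz forces $k=0$, a contradiction, so $g$ omits $\alpha_j$. This step is the only delicate one: one must handle the case where $g_n$ is constant, which does not occur for large $n$ when $g$ is nonconstant, since constant maps converge only to constants.

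\emph{Compactness when $\gamma>2$.} Since $P_{\alpha(q)}$ is closed, Fact \ref{thm:Zalcman} says that either it is compact or there exists $f\in\P_{\alpha(q)}(\C)$ with $|\nabla f|_e(0)=1$. In the latter case $f$ is nonconstant. Because $f\in\P_{\alpha(q)}(\C)$ and $f$ is nonconstant, each $\alpha_j$ is a totally ramified value of $f$ of order at least $\mu_j$. Fact \ref{fact:Nevanlinna} with $\gamma>2$ then forces $f$ to be constant, which is a contradiction. A root of multiplicity at least $\mu_j$ meets the order-$\mu_j$ condition, so the orders involved are compatible. Hence alternative (i) holds, and $P_{\alpha(q)}$ is compact.
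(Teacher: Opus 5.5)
Your proposal is correct and follows essentially the same route as the paper. You verify (P1) and (P2) directly, prove (P3) via Hurwitz's theorem plus the fact that the $g_n$ cannot be constant for large $n$ when the limit is nonconstant, and get compactness by ruling out alternative (ii) of Fact \ref{thm:Zalcman} with Fact \ref{fact:Nevanlinna}. The only difference is cosmetic: you treat the omitted-value case $\mu_j=\infty$ inside the same argument instead of citing \cite[Theorem 2.14]{KK2025}, which also handles mixtures of finite and infinite orders cleanly.
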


\begin{proof}
The case $\mu_j = \infty$ implies that $\alpha_j$ is an omitted value, which has already been proved in \cite[Theorem 2.14]{KK2025}. 
Hence, we focus on the case $\mu_j \in \Z_{\geq 2}$ in the following. \\
(P$1$) Let $\Sigma$ and $\Sigma'$ be two Riemann surfaces, let $\phi : \Sigma \longrightarrow \Sigma'$ be a holomorphic map without ramification points and $g \in \P_{\alpha(q)}(\Sigma')$. 
If each $\alpha_j$ is a totally ramified value of $g$ of order at least $\mu_j$, 
\begin{enumerate}
  \item[] $(g \circ \phi)^{-1} (\alpha_j) \ne \emptyset \Longrightarrow \mathrm{ord}_{(g \circ \phi) - \alpha_j}(p) \geq \mu_j \;\, (^{\forall} p \in (g \circ \phi)^{-1} (\alpha_j))$, 
  \item[] $(g \circ \phi)^{-1} (\alpha_j) = \emptyset \Longrightarrow$ $\alpha_j$ is an omitted value, i.e., a totally ramified value of order $\infty$. 
\end{enumerate}
Therefore, $g \circ \phi \in \P_{\alpha(q)}(\Sigma)$. 
If $g \equiv$ (constant) on $\Sigma'$, $g \circ \phi \equiv$ (constant) on $\Sigma$, that is, $g \circ \phi \in \P_{\alpha(q)}(\Sigma)$. \\
(P$2$) It is easy to see that $P_{\alpha(q)}$ satisfies this condition. \\
(P$3$) For a Riemann surface $\Sigma$, we consider any sequence $\{ g_n \}_{n=1}^\infty$ in $\P_{\alpha(q)}(\Sigma)$ satisfying $\{ g_n \}_{n=1}^\infty$ converges locally uniformly to $g$ on $\Sigma$ with respect to the spherical distance $\chi$. 
Assume that the limit function $g$ is nonconstant on $\Sigma$. 
Then we can prove that each $\alpha_j$ is a totally ramified value of $g$ of order at least $\mu_j$ by contradiction. 
Suppose that there exists $p \in \Sigma$ such that $\mathrm{ord}_{g - \alpha_j}(p) < \mu_j$. 
By the Hurwitz theorem, there exists a neighborhood $\Omega_p$ of $p$ and $N \in \Z_{>0}$ such that for any $n \geq N$, 
\begin{align*}
  \mu_j 
  > \mathrm{ord}_{g - \alpha_j}(p)
& = ( \mathrm{the \; order \; of \; zero \; points \; of \;} g - \alpha_j \; \mathrm{in} \; \Omega_p ) \\ 
& = ( \mathrm{the \; order \; of \; zero \; points \; of \;} g_n - \alpha_j \; \mathrm{in} \; \Omega_p ).
\end{align*}
Therefore, $^{\forall} n \geq N, \, ^{\exists} p_n \in \Omega_p$ s.t. $g_n(p_n) - \alpha_j = 0$ and $\mathrm{ord}_{g_n - \alpha_j}(p_n) < \mu_j$.  
From $g_n \in \P_{\alpha(q)}(\Sigma)\;(n \geq N)$, each $g_n\;(n \geq N)$ is  constant on $\Sigma$. 
Since $\{ g_n \}_{n=1}^\infty$ converges locally uniformly to $g$ on $\Sigma$ with respect to $\chi$, $g$ is constant on $\Sigma$. 
This is a contradiction. \\
(P$4$) We next show closed property $P_{\alpha_(q)}$ is compact if $\gamma > 2$. 
Suppose that $P_{\alpha(q)}$ is not compact. 
By Fact \ref{thm:Zalcman}, there exists $g \in \P_{\alpha(q)}(\C)$ satisfying $|\nabla g|_e(0)=1$. 
Note that $g$ is a nonconstant meromorphic function on $\C$. 
So each $\alpha_j$ is a totally ramified value of $g$ of order at least $\mu_j$.  
By Fact \ref{fact:Nevanlinna}, $g$ is constant on $\C$. 
This is a contradiction.  
\end{proof}

By applying Fact \ref{fact:merit of cpt}(B) to $\P_{\alpha(q)}$, we obtain the following.

\begin{corollary} \label{cor:Nev prop}
For $q \in \Z_{>0}$, let $\alpha(q) := \{ \alpha_1, \, \cdots, \, \alpha_q  \} \subset \RC$ be a set of distinct points and let $\mu_1, \, \cdots, \, \mu_q \in \Z_{\geq 2} \cup \{ \infty \}$ satisfy $\gamma := \sum_{j=1}^q (1 - (1/\mu_j) ) > 2$.
If a holomorphic map $g : \D \setminus \{ 0 \} \longrightarrow \RC$ satisfies the condition that each $\alpha_j$ is a totally ramified value of $g$ of order $\mu_j$, the puncture $z = 0$ is not an essential singularity of $g$.
\end{corollary}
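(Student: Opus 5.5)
The corollary should follow by feeding the property $P_{\alpha(q)}$ of Theorem \ref{thm:Nev prop} into Fact \ref{fact:merit of cpt}(B).

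First, I fix the data $\alpha(q)$ and $\mu_1, \dots, \mu_q$ with $\gamma > 2$. By Theorem \ref{thm:Nev prop}, the property $P_{\alpha(q)}$ (each $\alpha_j$ is a totally ramified value of order at least $\mu_j$, or the map is constant) is a compact property.

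Next, I check that the given map $g : \D \setminus \{0\} \to \RC$ lies in $\P_{\alpha(q)}(\D \setminus \{0\})$. By hypothesis, each $\alpha_j$ is a totally ramified value of $g$ of order $\mu_j$. By Definition \ref{def:TRV}, this means every root of $g = \alpha_j$ has multiplicity at least $\mu_j$, or $\alpha_j$ is omitted. This is exactly the "order at least $\mu_j$" requirement in $P_{\alpha(q)}$. If $g$ happens to be constant, it is trivially in the family.

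Finally, I apply Fact \ref{fact:merit of cpt}(B) to the compact property $P_{\alpha(q)}$. It gives directly that $z=0$ is not an essential singularity, i.e. $g$ extends to a holomorphic map $\D \to \RC$.

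The only point needing care is the membership step, namely matching "order $\mu_j$" in the corollary with "order at least $\mu_j$" in Theorem \ref{thm:Nev prop}. Since Definition \ref{def:TRV} is phrased as "no root of multiplicity less than $\mu$", the two notions coincide, and no real obstacle remains.
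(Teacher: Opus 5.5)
Your proposal is correct and is the paper's argument: the corollary follows by applying Fact \ref{fact:merit of cpt}(B) to $P_{\alpha(q)}$, which Theorem \ref{thm:Nev prop} shows is compact when $\gamma > 2$. Your membership check is also right, since Definition \ref{def:TRV} phrases ``order $\mu_j$'' as ``no root of multiplicity less than $\mu_j$,'' which is exactly the ``order at least $\mu_j$'' condition in $P_{\alpha(q)}$.
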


\subsection{Curvature Estimate of Ramification Property} \label{sec:3.2}

We verify that compact property $P_{\alpha(q)}$ satisfies $m$-curvature estimate if the sum $\gamma$ of weights is greater than $m+2$, which corresponds to \cite[Theorem 2.1]{Ka2015}. 

We first recall the following function-theoretic lemma. 

\begin{fact}\cite[Corollary 1.4.15]{Fu1993} \label{fac:Fujimoto lemma}
For $q \in \Z_{>0}$, let $\alpha(q) := \{ \alpha_1, \, \cdots, \, \alpha_q  \} \subset \RC$ be a set of distinct points and let $\mu_1, \, \cdots, \, \mu_q \in \Z_{\geq 2} \cup \{ \infty \}$ satisfy $\gamma := \sum_{j=1}^q (1 - (1/\mu_j) ) > 2$.
If a holomorphic map $g : D(0 ; R) \longrightarrow \RC$ satisfies the condition that each $\alpha_j$ is a totally ramified value of $g$ of order $\mu_j$, then, for arbitrary constants $\eta \geq 0$ and $\delta > 0$ with $\gamma - 2 > \gamma \, (\eta + \delta)$, there exists a positive constant $C'$, depending only on $\gamma$, $\eta$, $\delta$ and $L := \min_{i < j} \chi(\alpha_i, \alpha_j)$, such that 
\begin{align*}
\dfrac{|g'|}{(1+|g|^2) \, \prod_{j=1}^q (\chi(g,\alpha_j)^{1-1/\mu_j})^{1-(\eta + \delta)}} \leq C' \, \dfrac{R}{R^2-|z|^2} \quad \mathrm{on} \; D(0;R).
\end{align*}
\end{fact}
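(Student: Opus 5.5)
The plan is to use the Ahlfors--Schwarz lemma, that is, the maximum-principle comparison of a conformal pseudo-metric of curvature $\le -1$ on $D(0;R)$ with the Poincar\'e metric $2R\,|dz|/(R^2-|z|^2)$. Write $\chi_j := \chi(g,\alpha_j)$ and $w_j := 1-1/\mu_j$. We may assume $g$ is nonconstant, since otherwise the left-hand side vanishes. After a rotation of $\RC$, which is an isometry for $\chi$, we may also assume every $\alpha_j \neq \infty$.

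\emph{Step 1: build the pseudo-metric.} Consider
\begin{align*}
d\sigma = \dfrac{|g'|}{(1+|g|^2)\prod_{j=1}^q \chi_j^{\,w_j(1-\eta)}\,\log\bigl(a_0/\chi_j^{2}\bigr)}\,|dz|,
\end{align*}
where $a_0$ is a large constant depending only on $L$ and the data. The factor $\chi_j^{-w_j\delta}$ that appears in the target estimate is recovered at the end, because $\chi_j^{w_j\delta}\log(a_0/\chi_j^2)$ is bounded above by a constant depending only on $\delta$ and $a_0$. I expect the $\log$ factors to be needed to obtain strict negativity of the curvature near the $\alpha_j$.

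\emph{Step 2: continuity at the preimages of the $\alpha_j$.} Here the ramification hypothesis is used. Near $p$ with $g(p)=\alpha_j$ and order $k\ge\mu_j$, we have $|g'|\sim|z-p|^{k-1}$ and $\chi_j^{w_j}\sim|z-p|^{k(1-1/\mu_j)}$. Since $k/\mu_j\ge 1$, the exponent $k-1-k(1-1/\mu_j)=k/\mu_j-1$ is nonnegative. So $d\sigma$ is a continuous pseudo-metric, with zeros only, and its $\log$-density is subharmonic in the distribution sense there. The omitted case $\mu_j=\infty$ needs no check.

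\emph{Step 3: curvature computation, the main obstacle.} Away from singular points, use $\Delta\log(1+|g|^2)$ and
\begin{align*}
\Delta\log\chi_j^2 = 4\,\dfrac{|g'|^2}{(1+|g|^2)^2}\cdot(\text{const}) \quad\text{(plus point masses at $g^{-1}(\alpha_j)$)}.
\end{align*}
This gives
\begin{align*}
\Delta\log\lambda \ \ge\ \dfrac{|g'|^2}{(1+|g|^2)^2}\Bigl[\bigl(\textstyle\sum_j w_j(1-\eta)-2\bigr) + \sum_j \dfrac{c}{\chi_j^2\log^2(a_0/\chi_j^2)}-\dots\Bigr].
\end{align*}
The hypothesis $\gamma-2>\gamma(\eta+\delta)$ makes the bracket at least a positive constant times $\prod_j \chi_j^{2w_j\delta'}\log^{-2}$-type quantities.

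The delicate point is then to show
\begin{align*}
\Delta\log\lambda \ \ge\ C_0\,\lambda^2
\end{align*}
with $C_0$ depending only on $\gamma,\eta,\delta,L$. This requires two things:
\begin{itemize}
\item an elementary inequality on $\RC$ bounding $\prod_j \chi_j^{-2w_j(1-\eta)}\log^{-2}(a_0/\chi_j^2)$ by the positive terms;
\item the fact that at most one $\chi_j$ can be smaller than $L/2$ at any point, which is where $L$ enters.
\end{itemize}
I would first attempt this by splitting $\RC$ into the discs $\{\chi_j<L/2\}$ and their complement, and estimating each region separately.

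\emph{Step 4: conclude.} Apply the Ahlfors--Schwarz lemma to $\sqrt{C_0}\,\lambda$, which gives $\lambda\le C''\,R/(R^2-|z|^2)$. Finally, remove the $\log$ factors and the extra $\chi_j^{w_j\delta}$ by the boundedness noted in Step 1. This yields the stated inequality with $C'$ depending only on $\gamma,\eta,\delta,L$.
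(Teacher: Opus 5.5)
Your plan is correct and is essentially the argument behind this statement: the paper gives no proof of its own and only cites \cite[Corollary 1.4.15]{Fu1993}, whose proof is an Ahlfors--Schwarz comparison for the pseudo-metric with exponents $(1-\eta)(1-1/\mu_j)$ and factors $\log(a_0/\chi(g,\alpha_j)^2)$, followed by absorbing the bounded quantity $\chi(g,\alpha_j)^{\delta(1-1/\mu_j)}\log(a_0/\chi(g,\alpha_j)^2)$. In Step 3 only $\gamma(1-\eta)>2$ is needed, since $\delta$ enters only at the end, and the computation closes using $|\nabla\log\chi(g,\alpha_j)^2|^2=4\,\chi(g,\alpha_j)^{-2}\bigl(1-\chi(g,\alpha_j)^2\bigr)\,|g'|^2/(1+|g|^2)^2$ together with $\chi(g,\alpha_j)^{-2(1-\eta)(1-1/\mu_j)}\le\chi(g,\alpha_j)^{-2}$.
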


We also need the following inequality, which is derived from the Yau--Schwarz lemma (\cite[Theorem 2]{Yau}).

\begin{fact}\cite[Lemma 3.8]{KK2025} \label{fact:Yau-Schwarz}
Let $ds^2 = (\lambda(z))^2 \, |dz|^2$ be a complete Hermitian metric on the unit disk $\D$ whose Gaussian curvature $K_{ds^2}$ satisfies $K_{ds^2} \geq -1$. Then 
\begin{align*}
\lambda(z) \geq \dfrac{2}{1-|z|^2} \quad\mathrm{on}\;\D.
\end{align*}
\end{fact}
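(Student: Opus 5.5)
The inequality will follow from Yau's generalization of the Ahlfors--Schwarz lemma (\cite[Theorem 2]{Yau}), applied to the identity map of $\D$. Equip the source $\D$ with the given metric $ds^2=\lambda^2\,|dz|^2$, and the target $\D$ with the Poincar\'e metric
\[
ds_P^2=\dfrac{4\,|dz|^2}{(1-|z|^2)^2}.
\]
A direct computation from $K=-(\Delta\log\lambda_P)/\lambda_P^2$ with $\lambda_P=2/(1-|z|^2)$ gives $K_{ds_P^2}\equiv -1$.

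The hypotheses of Yau's theorem need to be checked. First, $(\D,ds^2)$ is a complete K\"ahler manifold of complex dimension one. Second, its Ricci curvature equals its Gaussian curvature, so it is bounded below by $-1$. Third, the target is a Hermitian manifold whose holomorphic bisectional curvature is, in dimension one, again just the Gaussian curvature $-1$, hence bounded above by $-1$. Yau's Schwarz lemma then says that any holomorphic map $h$ from the source to the target satisfies $h^{*}ds_P^2\le (K_1/K_2)\,ds^2$ with $K_1=K_2=1$. Taking $h=\mathrm{id}$ gives $4/(1-|z|^2)^2\le\lambda(z)^2$, which is exactly $\lambda(z)\ge 2/(1-|z|^2)$ on $\D$.

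The main point of care is the completeness hypothesis together with the normalizations. Completeness is what lets the Omori--Yau generalized maximum principle replace compactness in the proof of Yau's lemma. As for normalizations, the Poincar\'e metric must be scaled to curvature exactly $-1$, so that the constant in the conclusion is exactly $1$.

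If a self-contained argument is preferred, I would instead set $u=\log(\lambda_P/\lambda)$. The curvature bounds give $\Delta u\ge \lambda_P^2-\lambda^2$, which can be rewritten in terms of the Laplacian $\Delta_{ds^2}$ of the metric $ds^2$ as $\Delta_{ds^2}u\ge e^{2u}-1$. The remaining task is to show $\sup u\le 0$. I would handle the case where the supremum is infinite, or is not attained, by the Omori--Yau principle on the complete surface $(\D,ds^2)$, applied to a bounded transform such as $-(1+e^{u})^{-1/2}$ as in Yau's original proof. This reduction to a bounded function is the expected obstacle; everything else is routine.
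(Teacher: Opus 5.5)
Your proposal is correct and is essentially the paper's argument: the paper (citing Lemma 3.8 of \cite{KK2025}) obtains the inequality by applying Yau's Schwarz lemma \cite[Theorem 2]{Yau} to the identity map from the complete surface $(\D, ds^2)$ to $\D$ with the curvature $-1$ Poincar\'e metric $4|dz|^2/(1-|z|^2)^2$, which gives $4/(1-|z|^2)^2 \le \lambda^2$. The only slip is cosmetic: in Yau's formulation the constants are $K_1 = K_2 = -1$ (Ricci curvature $\ge K_1$, bisectional curvature $\le K_2 < 0$) rather than $+1$, but the ratio $K_1/K_2 = 1$ is the same.
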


The main result of this paper is the following. 

\begin{theorem} \label{thm:main theorem}
For $q \in \Z_{>0}$, let $\alpha(q) := \{ \alpha_1, \, \cdots, \, \alpha_q  \} \subset \RC$ be a set of distinct points, let $\mu_1, \, \cdots, \, \mu_q \in \Z_{\geq 2} \cup \{ \infty \}$ and let $P_{\alpha(q)}$ be the property in Theorem \ref{thm:Nev prop}. 
If 
\begin{align*}
  \gamma := \sum_{j=1}^q \left(1 - \dfrac{1}{\mu_j} \right) > m + 2,
\end{align*} 
then $P_{\alpha(q)}$ is a compact property that satisfies $m$-curvature estimate. 
In particular, provided that $\gamma > m + 2$, there exists a positive constant $C > 0$ such that for any Weierstrass $m$-triple $(\Sigma, \, f \, dz, \, g)$ satisfying the condition that each $\alpha_j$ is a totally ramified value of $g$ of order $\mu_j$, the following inequality holds:
\begin{align*}
|K_{ds^2}(p)|^{\frac{1}{2}} \leq \dfrac{C}{d(p)} \quad(^\forall p \in \Sigma).
\end{align*}
\end{theorem}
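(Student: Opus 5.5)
Since $\gamma > m+2 > 2$, Theorem \ref{thm:Nev prop} already shows that $P_{\alpha(q)}$ is a compact property. We therefore apply Fact \ref{fact:m-curvature est} and argue by contradiction. Suppose alternative (ii) holds. Then there is a Weierstrass $m$-triple $(\D, f\,dz, g)$ such that $g$ is nonconstant, each $\alpha_j$ is a totally ramified value of $g$ of order at least $\mu_j$, the metric $ds^2=\lambda^2|dz|^2$ with $\lambda=(1+|g|^2)^{m/2}|f|$ is complete, $|K_{ds^2}(0)|=1/4$, and $|K_{ds^2}|\le 1$. The goal is to show that completeness and the curvature normalization are incompatible with the ramification condition once $\gamma>m+2$.

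\textbf{Lower bound for $\lambda$.} Since $K_{ds^2}\ge -1$ and $ds^2$ is complete, Fact \ref{fact:Yau-Schwarz} gives $\lambda(z)\ge 2/(1-|z|^2)$ on $\D$.

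\textbf{Upper bound for $|K_{ds^2}|^{1/2}$.} By the curvature formula,
\[
|K_{ds^2}|^{1/2}=\frac{\sqrt{2m}\,|g'|}{(1+|g|^2)^{(m+2)/2}|f|}=\frac{\sqrt{2m}\,|g'|}{(1+|g|^2)\,\lambda}.
\]
Combining this with the lower bound for $\lambda$ gives
\[
|K_{ds^2}|^{1/2}\le \frac{\sqrt{2m}}{2}\,\frac{|g'|}{1+|g|^2}(1-|z|^2).
\]
On its own this yields no contradiction, because by Fact \ref{fac:Fujimoto lemma} with $R=1$ the spherical derivative can be as large as $1/(1-|z|^2)$. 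The argument must therefore use the weighted form of Fact \ref{fac:Fujimoto lemma} together with the information carried by $f$. This mirrors Fujimoto's and Kawakami's proofs.

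\textbf{Main step (the expected obstacle).} This is a Fujimoto-type construction of a flat metric on $\D$. Choose $\eta,\delta$ with $\gamma-2-m>\gamma(\eta+\delta)$ and $\eta$ chosen so that
\[
\tau:=\frac{m}{\gamma-2-\gamma\delta}<1 .
\]
Consider
\[
d\sigma^2=|f|^{2/(1-\tau)}\left(\frac{\prod_j\chi(g,\alpha_j)^{(1-1/\mu_j)(1-\eta-\delta)}}{|g'|}\right)^{2\tau/(1-\tau)}|dz|^2
\]
on $\D'=\D\setminus\{g'=0\}$. This metric is flat. The ramification orders $\mu_j$ guarantee that zeros of $g'$ at points of $g^{-1}(\alpha_j)$ are compensated by the factor $\chi(g,\alpha_j)^{1-1/\mu_j}$, so $d\sigma^2$ is a genuine flat metric with controlled singularities. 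We then follow Fujimoto's standard argument. If $d\sigma^2$ were complete, Fact \ref{fac:Fujimoto lemma} would give $ds\le C\,d\sigma$ along divergent paths. Otherwise, take the developing map $w$ on a maximal disk $|w|<R$ on which $d\sigma=|dw|$ and a divergent path in $\D'$. Fact \ref{fac:Fujimoto lemma} applied to $g\circ z(w)$ on $D(0;R)$ gives
\[
\lambda\,|dz|\le C'\Big(\frac{R}{R^2-|w|^2}\Big)^{\tau}|dw|,
\]
so the path has finite $ds$-length since $\tau<1$. This contradicts completeness, or forces the path to run into a zero of $g'$ not at an $\alpha_j$-point, where $d\sigma$ is infinite. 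The delicate points are the choice of exponents and the check that the path cannot accumulate on a zero of $g'$. These are handled exactly as in \cite[Theorem 2.1]{Ka2015}, and I expect this bookkeeping to be the main work.

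With this contradiction, alternative (ii) of Fact \ref{fact:m-curvature est} fails, so (i) holds and $P_{\alpha(q)}$ satisfies the $m$-curvature estimate with $C=C(P_{\alpha(q)},m)$. The final statement follows because any $g$ satisfying the ramification hypothesis lies in $\P_{\alpha(q)}(\Sigma)$.
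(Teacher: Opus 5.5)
You take a genuinely different route from the paper, but the main step fails as written. The paper never uses a developing map. It combines the Yau--Schwarz bound $\lambda\ge 2/(1-|z|^2)$ (which you derive and then never use) with Fact \ref{fac:Fujimoto lemma} on $\D$ itself ($R=1$), so that the flat metric $ds_0^2$ dominates a multiple of the Poincar\'e metric. Then $(D',ds_0^2)$ would be a complete flat surface, which is impossible for $D'\subset\D$.

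Your Kawakami-style argument is viable and would not even need (C), but your $d\sigma^2$ is not flat. Write $\nu:=\eta+\delta$ and $\eta_j:=1-1/\mu_j$. Then $\chi(g,\alpha_j)=|g-\alpha_j|(1+|g|^2)^{-1/2}(1+|\alpha_j|^2)^{-1/2}$ (or $(1+|g|^2)^{-1/2}$ if $\alpha_j=\infty$). So $\log\sigma$ contains the term $-\frac{\tau\gamma(1-\nu)}{2(1-\tau)}\log(1+|g|^2)$, whose Laplacian is a nonzero multiple of $|g'|^2/(1+|g|^2)^2$. Hence $K_{d\sigma^2}>0$ wherever $g'\neq0$, and there is no local isometry onto a Euclidean disk.

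To fix this, normalize $\alpha_q=\infty$ by a rotation of the sphere (replacing $f$ by $f\,(cg+d)^m$) and use $\prod_{j<q}|g-\alpha_j|^{\eta_j(1-\nu)}$ instead. The missing powers of $1+|g|^2$ are then supplied by Fact \ref{fac:Fujimoto lemma}, which reads $|g'|(1+|g|^2)^{r}\prod_{j<q}|g-\alpha_j|^{-\eta_j(1-\nu)}\le C\,R/(R^2-|w|^2)$ with $r=\frac{\gamma(1-\nu)}{2}-1$. The exponent must then be $\tau=m/(2r)=m/(\gamma-2-\gamma(\eta+\delta))$. After these corrections your $d\sigma$ is exactly the paper's $ds_0$, since $\tau/(1-\tau)=r'$.

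The bookkeeping you defer is where the argument actually lives. The path $\Gamma$ has finite $d\sigma$-length $R$; for it to be unable to end at a puncture, $\sigma$ must have order $\le -1$ at every puncture. Being infinite there is not enough.

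At a zero of $g'$ of order $k-1$ away from the $\alpha_j$-points, the order is $-\frac{\tau}{1-\tau}(k-1)$, so you need $\tau\ge 1/2$.

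At a point where $g=\alpha_j$ with multiplicity $k\ge\mu_j$ (poles included, via $(f\,dz)_0=m\,(g)_\infty$), the order is $\frac{\tau}{1-\tau}(1-\frac{k}{\mu_j}-k\eta_j\nu)\le-\frac{\tau\nu}{1-\tau}$. So the ramification does not compensate the zero of $g'$; it overcompensates. These points are genuine singularities of $d\sigma$, and in particular the poles of $g$ must also be removed from your $\D'$. Here you need $\tau\nu/(1-\tau)\ge1$.

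Both conditions hold once $\nu$ is taken close to $(\gamma-2-m)/\gamma$, i.e.\ with $\tau$ close to $1$. The paper's choice gives $\tau/(1-\tau)=r'=m/(\gamma\delta)$, which is large for small $\delta$. Your proposal imposes nothing of this kind: your $\tau$ omits $\eta$, and for example $\eta=0$, $\delta$ small, $m=1$, $\gamma>4$ gives $\tau\approx 1/(\gamma-2)<1/2$. Then $\Gamma$ could run into a simple zero of $g'$ with finite $d\sigma$-length.

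Finally, the case ``$d\sigma^2$ complete'' is not settled by Fact \ref{fac:Fujimoto lemma}. The correct dichotomy is between $R<\infty$ and $R=\infty$. When $R=\infty$, the inverse developing map is a nonconstant bounded entire function into $\D'\subset\D$, which Liouville's theorem rules out.
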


\begin{proof}
In the case where $\gamma > m + 2$, we prove that compact property $P_{\alpha(q)}$ satisfies $m$-curvature estimate by contradiction. 
Suppose that compact property $P_{\alpha(q)}$ does not satisfy $m$-curvature estimate. 
By Fact \ref{fact:m-curvature est}, there exist a Weierstrass $m$-triple $(\D, \, f \, dz, \, g)$ such that (A), (B) and (C) in Fact \ref{fact:m-curvature est} hold. 
Here, we may assume that $\alpha_{m+3}=\infty$ by composing with a suitable M\"{o}bius transformation if necessary. 

We choose a constant $\delta > 0$ such that $\gamma - (m+2) > 2 \, \gamma \, \delta >0$ and we set
\begin{align*}
  \eta := \dfrac{\gamma - (m+2) - 2 \, \gamma \, \delta}{\gamma} (>0), \quad \eta_j := 1 - \dfrac{1}{\mu_j} \left( \in \left[ \dfrac{1}{2}, 1 \right] \right).
\end{align*}
We remark that 
\begin{align*}
  \gamma \, (\eta + \delta) = \gamma - (m+2) - \gamma \, \delta < \gamma - 2. 
\end{align*}
By Fact \ref{fac:Fujimoto lemma}, there exists $C' > 0$ such that  
\begin{align} \label{eq:first Fujimoto}
\dfrac{|g'|}{(1+|g|^2) \, \prod_{j=1}^q (\chi(g, \alpha_j)^{\eta_j})^{1-(\eta + \delta)}} \leq  \dfrac{C'}{1-|z|^2} \quad \mathrm{on}\;\D.
\end{align}
From $\chi(g,\alpha_j) \leq |g-\alpha_j| \, (1+|g|^2)^{-\frac{1}{2}} \; (1 \leq j \leq q-1)$ and $\chi(g,\alpha_q) = (1+|g|^2)^{-\frac{1}{2}}$,
\begin{align*}
(1+|g|^2)  \prod_{j=1}^q (\chi(g, \alpha_j)^{\eta_j})^{1-(\eta + \delta)} 
& \leq (1+|g|^2)  \left( \prod_{j=1}^{q-1}  |g - \alpha_j|^{\eta_j} \right)^{1-(\eta + \delta)}  \left( \prod_{j=1}^q (1+|g|^2)^{-\frac{1}{2} \eta_j} \right)^{1-(\eta + \delta)} \\
& = \left( \prod_{j=1}^{q-1}  |g - \alpha_j|^{\eta_j} \right)^{1-(\eta + \delta)}  (1 + |g|^2)^{1 - \frac{1}{2} \gamma \{1 - (\eta + \delta)\} }. 
\end{align*}
Combining \eqref{eq:first Fujimoto} with this inequality, we obtain
\begin{align} \label{eq:second Fujimoto}
\dfrac{|g'| \, (1+|g|^2)^{\frac{1}{2} \gamma \{ 1-(\eta+\delta) \} -  1}}{\left(\prod_{j=1}^{q-1}|g-\alpha_j|^{\eta_j} \right)^{1 - (\eta + \delta) }} 
\leq \dfrac{C'}{1-|z|^2} \quad \mathrm{on}\;\D. 
\end{align}
We set $r := \dfrac{1}{2} \gamma \{ 1-(\eta+\delta) \} -  1$. 
Note that 
\begin{align*}
  r = \dfrac{1}{2} \{ \gamma - \gamma \, (\eta + \delta) -2 \} = \dfrac{1}{2} \, (m + 2 + \gamma \, \delta - 2 ) = \dfrac{1}{2} \, (m + \gamma \, \delta) > \dfrac{m}{2} \, ( > 0).
\end{align*}
By applying Fact \ref{fact:Yau-Schwarz} to the metric \eqref{eq:m-metric}, we have
\begin{align} \label{eq:Yau}
\dfrac{1}{1-|z|^2} < (1+|g|^2)^{\frac{m}{2}} \, |f| \, , \quad \mathrm{i.e.,} \quad \left( \frac{1}{1-|z|^2} \right)^{\frac{2r}{m}} < (1+|g|^2)^r \, |f|^{\frac{2r}{m}} \quad \mathrm{on} \; \D.
\end{align}
Here, we verify the following result:

\begin{lemma} \label{lem:order of h}
The function 
\begin{align*}
  h := \dfrac{|g'|}{\left( \prod_{j=1}^{q-1} |g-\alpha_j|^{\eta_j} \right)^{1-(\eta+\delta)}}
\end{align*}
vanishes at each point of $g^{-1}(\alpha_j)$.   
\end{lemma}
\begin{proof}[\textit{Proof of Lemma \ref{lem:order of h}}]
Set 
\begin{align*}
  \nu := \eta + \delta \, (\in (0, \, 1)), \quad \rho := \dfrac{\gamma - 2 - \gamma \, \nu}{2 \, (1 - \nu)} \, (>0). 
\end{align*}
Consider first a point $z_0 \in \D$ with $g(z_0) = \alpha_j$ for some $j=1, \, \cdots, \, q-1$, and let $\mu \, (\geq \mu_j)$ be the order of the zero of $g-\alpha_j$ at $z_0$.
Then, since g' has a zero of order $\mu - 1$ at $z_0$, the order of $h$ at $z_0$ is 
\begin{align*}
      \mu - 1 - \mu \eta_j (1 - \nu) 
  & = \mu - 1 - \left(\mu - \dfrac{\mu}{\mu_j} \right) + \mu \left( 1 - \dfrac{1}{\mu_j} \right) \nu \\
  & = \dfrac{\mu}{\mu_j} - 1 + \mu \left( 1 - \dfrac{1}{\mu_j} \right) \nu > 0.
\end{align*}
Next, assume that $g$ has a pole of order $\mu \, (\geq \mu_q)$ at $z_0 \in \D$. 
Then, since $g'$ has a pole of order $\mu + 1$ at $z_0$, the order of $h$ at $z_0$ is 
\begin{align*}
      - \mu - 1 + ( \eta_1 + \cdots + \eta_{q-1} ) \mu (1 - \nu )
  & = - \mu - 1 + \gamma \mu (1-\nu) - \eta_q \mu (1-\nu) \\
  & > \mu \gamma (1 - \nu) - \mu - 1 - \eta_q \mu \\
  & = \mu (2 + 2 \rho (1 - \nu)) - \mu -1 - \left( \mu - \dfrac{\mu}{\mu_q} \right) \\
  & = 2 \mu \rho (1 - \nu) + \left( \dfrac{\mu}{\mu_q} - 1 \right) \\
  & \geq 2 \mu \rho (1 - \nu) \, ( > 0).
\end{align*}
In particular, the order of $h$ is greater than $2 \mu \rho (1 - \nu) ( > 0)$.
\end{proof}

By Lemma \ref{lem:order of h}, \eqref{eq:second Fujimoto} and \eqref{eq:Yau}, we obtain 
\begin{align*}
         \left( \frac{1}{1-|z|^2} \right)^{\frac{2r}{m}} \, \dfrac{|g'|}{\left( \prod_{j=1}^{q-1} |g-\alpha_j|^{\eta_j} \right)^{1-(\eta+\delta)}} 
  & \leq \dfrac{|g'| \, (1+|g|^2)^r \, |f|^{\frac{2r}{m}}}{\left(\prod_{j=1}^{q-1}|g-\alpha_j|^{\eta_j} \right)^{1 - (\eta + \delta) }}  \leq \dfrac{C'}{1 - |z|^2} \, |f|^{\frac{2r}{m}} \quad \mathrm{on} \; \D.
\end{align*}
On $D':= \D \setminus \left( (g')^{-1}(0) \cup \bigcup_{j=1}^q g^{-1}(\alpha_j) \right)$, we thereby obtain 
\begin{align*}
\dfrac{1}{C'} \, \dfrac{1}{(1-|z|^2)^{ \frac{2r}{m} - 1 }} 
\leq \dfrac{|f|^{ \frac{2r}{m} } \, \left( \prod_{j=1}^{q-1}|g-\alpha_j|^{\eta_j} \right)^{ 1 - (\eta + \delta) }}{|g'|}.
\end{align*}
Since $r' := \dfrac{1}{(2r/m) - 1} > 0$, we have
\begin{align} \label{eq:final Fujimoto}
\dfrac{(C')^{-r'}}{1-|z|^2} \leq \left( \dfrac{|f|^{ \frac{2r}{m} } \, \left( \prod_{j=1}^{q-1}|g-\alpha_j|^{\eta_j} \right)^{ 1 - (\eta + \delta) }}{|g'|} \right)^{r'} \quad \mathrm{on} \; D'.
\end{align}
Let $\lambda$ be the right hand side of $\eqref{eq:final Fujimoto}$ and we consider the conformal metric $ds_0^2 := \lambda^2 \, |dz|^2$ on $D'$. 
By applying $\Delta \log|F| = 0$ for any holomorphic function $F$ without zeros, we obtain $K_{ds_0^2}=-\Delta \log \lambda/\lambda^2 \equiv 0$ on $D'$, that is, $ds_0^2$ is a flat metric on $D'$. 
Furthermore, $ds_0^2$ is a complete metric on $D'$. 
Indeed, the left hand side of \eqref{eq:final Fujimoto} is the Poincar\'{e} metric and, by Lemma \ref{lem:order of h} and \eqref{eq:regularity}, $\lambda$ diverges at each puncture of $D'$.
Therefore the universal covering of $(D',ds_0^2)$ is $\C$. 
This is a contradiction. 
\end{proof}

As a corollary of Theorem \ref{thm:main theorem}, we give the following ramification theorem (\cite[Corollary 2.2]{Ka2015}).

\begin{corollary}\label{cor:Kawakami2015}
For $q \in \Z_{>0}$, let $\alpha(q) := \{ \alpha_1, \, \cdots, \, \alpha_q  \} \subset \RC$ be a set of distinct points, let $\mu_1, \, \cdots, \, \mu_q \in \Z_{\geq 2} \cup \{ \infty \}$ satisfy $\gamma := \sum_{j=1}^q (1 - (1/\mu_j) ) > m + 2$ and let $(\Sigma, \, f \, dz, \, g)$ be a Weierstrass $m$-triple.
If the metric given by \eqref{eq:m-metric} is complete on $\Sigma$ and each $\alpha_j$ is a totally ramified value of $g$ of order $\mu_j$, 
then $g$ is constant on $\Sigma$.
\end{corollary}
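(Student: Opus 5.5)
The plan is to derive Corollary \ref{cor:Kawakami2015} directly from the curvature estimate of Theorem \ref{thm:main theorem}, by letting the distance to the boundary tend to infinity. Suppose $g$ is nonconstant. Then $g \in \P_{\alpha(q)}(\Sigma)$: each $\alpha_j$ is a totally ramified value of order $\mu_j$, hence in particular of order at least $\mu_j$. Since $\gamma > m+2$, Theorem \ref{thm:main theorem} supplies a constant $C = C(P_{\alpha(q)}, m) > 0$, independent of the triple, such that
\begin{align*}
|K_{ds^2}(p)|^{\frac{1}{2}} \leq \dfrac{C}{d(p)} \quad (^\forall p \in \Sigma).
\end{align*}
Completeness of $ds^2$ on $\Sigma$ means that the geodesic distance from any $p$ to the boundary is infinite, so $d(p) = +\infty$. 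To make this rigorous without using $d=\infty$ literally, I would apply the estimate to the Weierstrass $m$-triples obtained by restricting $(f\,dz, g)$ to geodesic balls $B(p;R)$. These are open Riemann surfaces, and by (P$1$) applied to the inclusion, $g|_{B(p;R)} \in \P_{\alpha(q)}(B(p;R))$. Completeness (Hopf--Rinow) guarantees that the distance from $p$ to the boundary of $B(p;R)$ is $R$. The estimate then gives $|K_{ds^2}(p)|^{1/2} \leq C/R$ for every $R>0$, and hence $K_{ds^2}(p)=0$ for all $p\in\Sigma$.

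By the curvature formula in the remark following Definition \ref{def:m-pair},
\begin{align*}
K_{ds^2} = -\dfrac{2m\,|g'|^2}{(1+|g|^2)^{m+2}|f|^2}.
\end{align*}
This quantity vanishes identically only if $g' \equiv 0$ away from the poles of $g$ (the metric factor is positive and finite by \eqref{eq:regularity}). Hence $g$ is constant, contradicting our assumption.

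The only delicate point is the restriction argument: the restricted metric must genuinely be a Weierstrass $m$-triple on a Riemann surface whose boundary distance from $p$ is $R$, and $C$ must not depend on the domain. Both follow from completeness and the uniformity of $C$ in Definition \ref{def:m-curvature est}, so I expect no real obstacle.
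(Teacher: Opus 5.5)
Your proposal is correct and follows the paper's proof: apply the uniform curvature estimate of Theorem \ref{thm:main theorem}, use completeness to make the right-hand side $C/d(p)$ vanish, conclude $K_{ds^2}\equiv 0$, and deduce from the curvature formula that $g$ is constant. Your restriction to geodesic balls $B(p;R)$ is only a more careful version of the paper's one-line step ``$d(p)=\infty$, hence $K_{ds^2}\equiv 0$''; it uses (P$1$) for the inclusion, Hopf--Rinow for boundary distance at least $R$, and the fact that $C$ does not depend on the triple.
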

\begin{proof}
By Theorem \ref{thm:main theorem}, there exists $C > 0$ such that $|K_{ds^2}(p)|^{\frac{1}{2}} \leq C/d(p) \;(^\forall p \in \Sigma)$. 
Since $d(p) = \infty$ for any $p \in \Sigma$ in the case where $ds^2$ is a complete metric on $\Sigma$, $K_{ds^2} \equiv 0$ on $\Sigma$, that is, $g$ is constant on $\Sigma$.
\end{proof}

Finally, we also obtain generalizations of the Fujimoto theorem. 

\begin{corollary} \cite[Corollary 3.11]{KK2025} \label{cor:KK2025}
Let $(\Sigma, \, f \, dz, \, g)$ be a Weierstrass $m$-triple.
If the metric given by \eqref{eq:m-metric} is complete on $\Sigma$ and $g$ is nonconstant on $\Sigma$, then $g$ can omit at most $m + 2$ distinct values. 
Furthermore, this result is optimal.
\end{corollary}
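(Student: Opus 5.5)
The plan is to combine two facts: the ramification property is compact, and omitting a value is the extreme case of a totally ramified value. The corollary has two parts. First, with complete metric \eqref{eq:m-metric} and nonconstant $g$, $g$ omits at most $m+2$ distinct values. Second, the bound $m+2$ is sharp.

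For the upper bound, suppose $g$ omits $q \geq m+3$ distinct values $\alpha_1, \dots, \alpha_q \in \RC$. By Definition \ref{def:TRV}, each $\alpha_j$ is then a totally ramified value of order $\mu_j = \infty$, with weight $1$. Hence $\gamma = q \geq m+3 > m+2$. Corollary \ref{cor:Kawakami2015} applies directly, since it allows $\mu_j \in \Z_{\geq 2} \cup \{\infty\}$. It says that on a complete Weierstrass $m$-triple such a $g$ must be constant, which contradicts the hypothesis. Alternatively, the property ``omits $\alpha_1, \dots, \alpha_q$ or is constant'' is $P_{\alpha(q)}$ with all $\mu_j = \infty$. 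Theorem \ref{thm:main theorem} then gives $|K_{ds^2}|^{1/2} \leq C/d$, and $d \equiv \infty$ forces $K_{ds^2} \equiv 0$, i.e.\ $g$ is constant. So this part is immediate.

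For optimality, I would exhibit, for each $m$, a complete Weierstrass $m$-triple whose nonconstant $g$ omits exactly $m+2$ values. I would follow the classical Osserman-type construction. Take distinct $a_1, \dots, a_{m+1} \in \C$ and let $\Sigma = \C \setminus \{a_1, \dots, a_{m+1}\}$. Set $g(z) = z$, so $g$ omits $a_1, \dots, a_{m+1}$ and $\infty$, which is $m+2$ values. Set
\[
f\,dz = \frac{dz}{\prod_{j=1}^{m+1}(z-a_j)}.
\]
Since $g$ has no poles, \eqref{eq:regularity} holds and $ds^2$ is nondegenerate.

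It remains to check completeness of this metric. Near each $a_j$, $ds \sim c\,|dz|/|z-a_j|$, so divergent paths into the puncture have infinite length. Near $\infty$,
\[
ds \sim |z|^{m}\,|z|^{-(m+1)}\,|dz| = |dz|/|z|,
\]
which again has logarithmically divergent length. Hence the metric is complete, proving sharpness.

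The main obstacle is only this completeness verification at $\infty$. There the exponent bookkeeping must give exactly $|z|^{-1}$ and not a faster decay. This is why one takes precisely $m+1$ finite punctures together with the omitted value $\infty$. If necessary I would instead cite the examples in \cite[Corollary 3.11]{KK2025} and \cite{Ka2015}.
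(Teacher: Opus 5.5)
Your proof is correct. Your upper-bound argument is the paper's: omitted values are totally ramified values of order $\infty$, each of weight $1$, so Corollary \ref{cor:Kawakami2015} applies with $\gamma=q$. Assuming $m+3$ omitted values and deriving a contradiction is slightly cleaner than the paper's ``$D_g=\gamma\le m+2$'', which tacitly treats $\RC\setminus g(\Sigma)$ as a finite set.

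The genuine difference is optimality. The paper's proof does not address it at all and only defers to \cite[Corollary 3.11]{KK2025}. You instead supply the standard explicit example $\Sigma=\C\setminus\{a_1,\dots,a_{m+1}\}$, $g=z$, $f\,dz=dz/\prod_j(z-a_j)$, and it works:
\begin{itemize}
\item $f$ has neither zeros nor poles on $\Sigma$ and $g$ has no poles, so \eqref{eq:regularity} holds.
\item $g$ is nonconstant and omits exactly $a_1,\dots,a_{m+1}$ and $\infty$.
\item The density $(1+|z|^2)^{m/2}/\prod_j|z-a_j|$ is bounded below by $c_j/|z-a_j|$ near each $a_j$, with $c_j>0$.
\item For $|z|>\max_j|a_j|$ one has $|z-a_j|\le 2|z|$, so the density is bounded below by $2^{-(m+1)}/|z|$.
\end{itemize}

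To make the completeness step airtight, replace your asymptotic ``$\sim$'' by these explicit lower bounds. Also note that a divergent path in $\Sigma$ eventually stays in a single end neighbourhood, either a punctured disc around some $a_j$ or a neighbourhood of $\infty$, because these neighbourhoods can be chosen disjoint and the path is connected. Using $|d|z-a|| \le |dz|$, the length of such a path is then at least a divergent logarithmic integral.

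So your proposal proves strictly more of the statement than the paper's own proof does.
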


\begin{proof}
Let $D_g$ denote the number of omitted values of $g$. 
By applying Corollary \ref{cor:Kawakami2015} to $\{ \alpha_1, \cdots, \alpha_{D_g} \} = \RC \setminus g(\Sigma)$, we obtain
\begin{align*}
  D_g =\gamma \leq m+2. 
\end{align*}
\end{proof}

\textbf{Acknowledgements.} The author would like to sincerely express to Yu Kawakami for invaluable advice and guidance throughout this research. The author also thanks Kazuya Tohge, Atsuhira Nagano, Tomoki Kawahira for their warm encouragement. This work was supported by JST SPRING, Grant Nember JPMJSP2135.


\end{document}